\newcommand{\thickhline}{%
    \noalign {\ifnum 0=`}\fi \hrule height 1pt
    \futurelet \reserved@a \@xhline
}
\newcolumntype{"}{@{\hskip\tabcolsep\vrule width 1pt\hskip\tabcolsep}}
\newtheorem{theorem}{Theorem}[section]
\newtheorem{lemma}[theorem]{Lemma}
\theoremstyle{definition}
\newtheorem{proposition}[theorem]{Proposition}
\theoremstyle{remark}
\numberwithin{equation}{section}
\def\m{{\mathfrak m}}
\def\bb{\beta}
\newcommand{\rank}{\operatorname{rank}}
\begin{document}
\title{Betti numbers for certain Cohen-Macaulay tangent cones}
\author{Mesut \c{S}ah\.{i}n}
\address{Department of Mathematics,
Hacettepe University, Ankara,  06800 Turkey}
\email{mesut.sahin@hacettepe.edu.tr}
\author{N\.{i}l \c{S}ah\.{i}n}
\address{Department of Industrial Engineering, Bilkent University, Ankara, 06800 Turkey}
\email{nilsahin@bilkent.edu.tr}

\thanks{The authors were supported by the project 114F094 under the program 1001 of the
Scientific and Technological Research Council of Turkey.}

\subjclass[2010]{Primary 13H10, 14H20; Secondary 13P10}
\keywords{Betti number, tangent cone, monomial
curve, numerical semigroup, free resolution}

\date{\today}

\commby{}

\dedicatory{}

\begin{abstract}
In this article, we compute Betti numbers for a Cohen-Macaulay tangent cone of a monomial curve in the affine $4$-space corresponding to a pseudo symmetric numerical semigroup. As a byproduct, we also show that for these semigroups, being of homogeneous type and homogeneous are equivalent properties. 
\end{abstract}

\maketitle

\section{introduction}
Let $S=\langle n_1,\dots,n_k \rangle=\{ \displaystyle u_1n_1+\cdots+u_kn_k | u_i \in \mathbb{N}\}$ be a numerical semigroup generated by the positive integers $n_1,\dots,n_k$ with $\gcd (n_1,\dots,n_k)=1$. $K$ being a field, let $A=K[X_1,X_2,\dots,X_k]$ and $K[S]$ be the semigroup ring $K[t^{n_1}, t^{n_2}, \dots, t^{n_k}]$ of $S$. Then $K[S]\simeq A/I_S$ where $I_S$ is the kernel of the surjection $A \stackrel{\phi_0}{\longrightarrow} K[S]$, associating $X_i$ to $t^{n_i}$.  If $C_S$ is the affine curve with parameterization 
$$X_1=t^{n_1},\ \ X_2=t^{n_2},\ \dots,\  X_k=t^{n_k}  $$
corresponding to $S$, and $1\notin S$ then the curve is singular at the origin. The smallest minimal generator of $S$ is called the \textit{multiplicity} of $C_S$. In order to understand this singularity deeper, it is natural to study algebraic properties of the local ring $R_S=K[[t^{n_1},\dots,t^{n_k}]]$ with the maximal ideal $\m=\langle t^{n_1},\dots,t^{n_k}\rangle$ and its associated graded ring 
$$gr_{\mathfrak{m}}(R_S)=\bigoplus_{i=0}^{\infty} \mathfrak{m}^i/\mathfrak{m}^{i+1}\cong A/{I^*_S},$$
where ${I^*_S}=\langle f^*| f \in I_S \rangle$ with $f^*$ denoting the least homogeneous summand of $f$. When $K$ is algebraically closed, $K[S]$ is the coordinate ring of the monomial curve $C_S$ and $gr_{\mathfrak{m}}(R_S)$ is the coordinate ring of its tangent cone. One natural set of invariants for these coordinate rings attracting attention is the Betti sequence. We refer to the nice survey \cite{stamate} written by Stamate for a comprehensive literature on this subject. Recall that the Betti sequence $\beta(M)=(\bb_0,\dots,\bb_{k-1})$ of an $A$ module $M$ is the sequence consisting of the ranks of the free modules in a minimal free resolution ${\bf F}$ of $M$, where
$${\bf F}: \ 0\longrightarrow A^{\beta_{k-1}}\stackrel{}{\longrightarrow}\cdots\stackrel{}{\longrightarrow}A^{\beta_1}\stackrel{}{\longrightarrow}A^{\beta_0}.$$

When $\beta(A/{I^*_S})=\beta(K[S])$ the semigroup $S$ is said to be of homogeneous type as defined by \cite{HerzogRossiValla}. In particular, if a semigroup is of homogeneous type then the Betti sequence of its Cohen-Macaulay tangent cone can be obtained from a minimal free resolution of $K[S]$. So as to take advantage of this idea, Jafari and Zarzuela  introduced the concept of \textit{homogeneous} semigroup in \cite{jafari}. When the multiplicity of a monomial curve corresponding to a homogeneous semigroup is $n_i$, homogeneity guarantees the existence of a minimal generating set for $I_S$ whose image under the map $\pi_i : A \rightarrow \bar{A}=K[X_1,\dots,\bar{X_i},\dots,X_k]$ is homogeneous, where $\pi(X_i) = \bar{X_i}=0$ and $\pi(X_j) = X_j$ for $i \neq j$. Together with the assumption to have a Cohen-Macaulay tangent cone, this property is inherited by a standard basis of $I_S$ and the authors of \cite{jafari} were able to prove that $S$ is of homogeneous type. The converse is not true in general: there exists a $3$-generated numerical semigroup with a complete intersection tangent cone which is of homogeneous type but not homogeneous, see \cite[Example 3.19]{jafari}. They also ask in \cite[Question 4.22]{jafari} if there are $4$-generated semigroups of homogeneous type which are not homogeneous having non complete intersection tangent cones. As homogeneous type semigroups have Cohen-Macaulay tangent cones, we restrict our attention to monomial curves having Cohen-Macaulay tangent cones in this article.

The problem of determining the Betti sequence for the tangent cone (see \cite[Problem 9.9]{stamate}) is studied for 4 generated symmetric monomial curves by Mete and Zengin \cite{mz}. In this paper, we focus on the next interesting case of 4-generated pseudo symmetric monomial curves. Using the standard bases we obtained in \cite{SahinSahin}, we determine the Betti sequence for the tangent cone addressing \cite[Problem 9.9]{stamate}) for 4-generated pseudo symmetric monomial curves having Cohen-Macaulay tangent cones, and prove that being homogeneous and of homogeneous type is equivalent answering \cite[Question 4.22]{jafari}. So, in most cases, there is no $4$ generated pseudo symmetric numerical semigroup of homogeneous type which is not homogeneous. Before we state our main result, let us recall from \cite{komeda} that a $4$-generated semigroup $S=\langle n_1,n_2,n_3,n_4 \rangle$ is pseudo-symmetric if and only if there are integers $\alpha_i>1$, for
$1\le i\le4$, and $\alpha_{21}>0$ with $\alpha_{21}<\alpha_1-1$,
such that 
\begin{eqnarray*}
n_1&=&\alpha_2\alpha_3(\alpha_4-1)+1,\\
n_2&=&\alpha_{21}\alpha_3\alpha_4+(\alpha_1-\alpha_{21}-1)(\alpha_3-1)+\alpha_3,\\
n_3&=&\alpha_1\alpha_4+(\alpha_1-\alpha_{21}-1)(\alpha_2-1)(\alpha_4-1)-\alpha_4+1,\\
n_4&=&\alpha_1\alpha_2(\alpha_3-1)+\alpha_{21}(\alpha_2-1)+\alpha_2. 
\end{eqnarray*}
Then, the toric ideal $I_S=\langle f_1,f_2,f_3,f_4,f_5 \rangle$ with
\begin{eqnarray*} f_1&=&X_1^{\alpha_1}-X_3X_4^{\alpha_4-1},  \quad \quad
f_2=X_2^{\alpha_2}-X_1^{\alpha_{21}}X_4, \quad
f_3=X_3^{\alpha_3}-X_1^{\alpha_1-\alpha_{21}-1}X_2,\\
f_4&=&X_4^{\alpha_4}-X_1X_2^{\alpha_2-1}X_3^{\alpha_3-1}, \quad 
f_5=X_1^{\alpha_{21}+1}X_3^{\alpha_3-1}-X_2X_4^{\alpha_4-1}.
\end{eqnarray*}  
 The Betti sequence of $K[S]$ for a $4$-generated pseudo symmetric semigroup is $\bb(K[S])=(1,5,6,2)$ by \cite{barucci}. Hence, $S$ is of homogeneous type if and only if the Betti sequence of the tangent cone is also $\bb(A/{I^*_S})=(1,5,6,2)$. We refer the reader to \cite{eto} for the Betti sequence of $K[S]$ for $4$-generated almost symmetric semigroups.

Our main result is as follows:

\begin{theorem} \label{main}Let $S$ be a $4$-generated pseudo-symmetric semigroup with a Cohen-Macaulay tangent cone. Then, the Betti sequence $\beta(A/{I^*_S})$ of the tangent cone is

\begin{itemize}
\item  $\beta(A/{I^*_S})=(1,5,6,2)$ if $n_1$ is the multiplicity, 
\item  $\beta(A/{I^*_S})=(1,5,6,2)$ if $n_2$ is the multiplicity and $\alpha_1=\alpha_4$,\\
$\beta(A/{I^*_S})=(1,5,7,3)$ if $n_2$ is the multiplicity and $\alpha_1<\alpha_4$,\\
$\beta(A/{I^*_S})=(1,6,9,4)$ if $n_2$ is the multiplicity and $\alpha_1>\alpha_4$,
\item  $\beta(A/{I^*_S})=(1,5,6,2)$ if $n_3$ is the multiplicity,  and $\alpha_2=\alpha_{21}+1$,\\
$\beta(A/{I^*_S})=(1,6,8,3)$ if $n_3$ is the multiplicity,  and $\alpha_2<\alpha_{21}+1$,
\item  $\beta(A/{I^*_S})=(1,5,6,2)$ if $n_4$ is the multiplicity, and $\alpha_3=\alpha_1-\alpha_{21}$,\\
 $\beta(A/{I^*_S})=(1,5,7,3)$ if $n_4$ is the multiplicity, and $\alpha_3<\alpha_1-\alpha_{21}$.
\end{itemize}

\end{theorem}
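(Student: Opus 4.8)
The plan is to reduce the computation to explicit work with the standard bases of $I_S$ obtained in \cite{SahinSahin}, handling the four possibilities for the multiplicity (and then the subcases on the parameters $\alpha_i$, $\alpha_{21}$) one at a time. First, $\dim A/I^*_S=\dim K[S]=1$, so the Cohen--Macaulay hypothesis gives $\operatorname{depth}_A A/I^*_S=1$, whence $\operatorname{pd}_A A/I^*_S=3$ by the Auslander--Buchsbaum formula. Thus the minimal free resolution has the shape
\[
0 \longrightarrow A^{\beta_3} \longrightarrow A^{\beta_2} \longrightarrow A^{\beta_1} \longrightarrow A \longrightarrow A/I^*_S \longrightarrow 0,
\]
and since $I^*_S$ has positive height, comparing ranks forces $\beta_1-\beta_2+\beta_3=1$. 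So in each case it suffices to pin down $\beta_1$ (the minimal number of generators of $I^*_S$) and $\beta_2$ (the minimal number of first syzygies) and to confirm the resolution has length exactly $3$; then $\beta_3$ is forced, and can be double-checked by exhibiting the last differential and invoking the Buchsbaum--Eisenbud acyclicity criterion.

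\textbf{From standard bases to $I^*_S$.} For each choice of the smallest generator, \cite{SahinSahin} provides (under the Cohen--Macaulay hypothesis) an explicit standard basis $G$ of $I_S$ for a local order refining the $\m$-adic filtration. Taking least homogeneous summands $g^*$ of the elements $g\in G$ yields a generating set of $I^*_S$ consisting of monomials and equal-degree binomials; discarding the redundant ones gives a minimal generating set and hence $\beta_1$. The subcase splitting in the statement --- $\alpha_1$ versus $\alpha_4$ when $n_2$ is the multiplicity, $\alpha_2$ versus $\alpha_{21}+1$ when $n_3$ is the multiplicity, $\alpha_3$ versus $\alpha_1-\alpha_{21}$ when $n_4$ is the multiplicity --- is precisely what decides, for each $f_j$ and for the extra standard basis elements produced by $S$-polynomial reductions, which of its two terms has strictly smaller degree, hence whether $g^*$ is a monomial or a genuine binomial and how many new generators it contributes; this is why the ``balanced'' subcase always returns the symmetric answer $(1,5,6,2)$.

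\textbf{Syzygies and conclusion.} In each subcase I would then compute the first syzygy module of the minimal generators of $I^*_S$ directly: since these generators are monomials and binomials, the candidate relations are the usual $S$-polynomial (Taylor-type) relations together with the binomial relations, and minimalizing them (discarding those with unit coefficients) yields $\beta_2$. In the cases where the answer is $(1,5,6,2)$ this recovers the resolution of $K[S]$ from \cite{barucci}, so $S$ is of homogeneous type and one only checks that the relations lift to the initial forms; in the remaining cases one tracks the extra generator(s) and the extra syzygies they force, arriving at $(1,5,7,3)$, $(1,6,9,4)$, or $(1,6,8,3)$. Finally $\beta_3=1-\beta_1+\beta_2$, confirmed by displaying the map $A^{\beta_3}\to A^{\beta_2}$. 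For the byproduct, one observes that the subcases with $\beta(A/I^*_S)=(1,5,6,2)$ are exactly those in which $S$ is homogeneous --- comparing the equalities $\alpha_1=\alpha_4$, $\alpha_2=\alpha_{21}+1$, $\alpha_3=\alpha_1-\alpha_{21}$ (and the case $n_1$ being the multiplicity) with the description of homogeneity for $4$-generated pseudo-symmetric semigroups --- which gives the claimed equivalence and answers \cite[Question 4.22]{jafari}.

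\textbf{Main obstacle.} The substantive difficulty is the bookkeeping. There are many subcases; in each, the standard basis has up to roughly ten elements, several of whose least homogeneous summands are monomials, and the syzygy computations are delicate. The crux is to recognize precisely when a relation among the monomial/binomial generators of $I^*_S$ is a \emph{minimal} generator of the syzygy module (i.e.\ has all its coefficients in $\m$) rather than a consequence of Koszul-type relations, and to be certain that no standard basis element --- hence no generator of $I^*_S$ --- is overlooked when passing to least homogeneous summands. One also has to use the known characterization of when the tangent cone of a $4$-generated pseudo-symmetric monomial curve is Cohen--Macaulay (in terms of the $\alpha_i$), both to guarantee that the listed subcases are exhaustive and to check that they are consistent with each choice of multiplicity (for instance, under these hypotheses certain comparisons, such as $\alpha_2>\alpha_{21}+1$ when $n_3$ is the multiplicity, do not occur, which is why fewer subcases are recorded there).
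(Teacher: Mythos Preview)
Your outline is broadly aligned with the paper's: both start from the standard bases of $I_S$ computed in \cite{SahinSahin}, split into the same subcases governed by the comparisons among the $\alpha_i$, and certify the final resolutions via Buchsbaum--Eisenbud. There are, however, two genuine differences worth noting.

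First, the paper does not compute syzygies of $I^*_S\subset A$ in four variables. Instead it uses (their Lemma~\ref{HS}) that when $n_i$ is the multiplicity and the tangent cone is Cohen--Macaulay, $X_i$ is a non-zerodivisor on $A/I^*_S$, so $\beta(A/I^*_S)=\beta(\bar A/\bar I)$ with $\bar A=K[X_1,\dots,\widehat{X_i},\dots,X_4]$ and $\bar I=\pi_i(I^*_S)$. The payoff is that in every non-homogeneous subcase $\bar I$ is a \emph{monomial} ideal in three variables: each initial form that is still a binomial has one term divisible by $X_i$, and that term dies under $\pi_i$. The paper then simply writes down explicit matrices $\phi_1,\phi_2,\phi_3$ for the resolution of $\bar A/\bar I$ and checks exactness by exhibiting minors that are pure powers of distinct variables. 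Your plan to handle the mixed monomial/binomial generators of $I^*_S$ in four variables would work in principle, but is considerably messier; this regular-element reduction to a three-variable monomial ideal is the main organisational trick you are missing, and it is what makes the explicit matrices in the paper tractable.

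Second, for the ``balanced'' subcases the paper does not verify by hand that Barucci's resolution of $K[S]$ lifts to $I^*_S$. It first proves (Proposition~\ref{prop1}) that those four subcases are exactly the ones in which $S$ is homogeneous in the sense of \cite{jafari}, and then invokes the theorem of Jafari--Zarzuela that homogeneous together with Cohen--Macaulay tangent cone implies homogeneous type, giving $(1,5,6,2)$ with no further computation. Your route amounts to reproving that implication instance by instance; either works, but the paper's is shorter and, once the non-homogeneous Betti sequences are seen to differ from $(1,5,6,2)$, makes the equivalence ``homogeneous $\Leftrightarrow$ homogeneous type'' an immediate corollary rather than an afterthought.
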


 We illustrate in Table \ref{tab:1} that there are pseudo symmetric monomial curves with Cohen-Macaulay tangent cones in all of these cases.

\begin{table}
\caption{Examples of each case}\label{tab:1}
\centering
\begin{tabular}{|c|c|c|c|c||c|c|c|c||c|c|c|c|}
  \hline
  $\alpha_{21}$&$\alpha_1$& $\alpha_2$ & $\alpha_3$& $\alpha_4$&$n_1$&$n_2$&$n_3$&$n_4$&$\beta_0$&$\beta_1$&$\beta_2$&$\beta_3$\\ 
  \hline
  2 &5 &3 &2 &2 &7 &12 & 13 & 22 &1 &5 & 6 & 2\\
 \hline
 2&4&4&2&4&25&19&22&26&1 &5 & 6 & 2\\
  \hline
 2&4&4&2&5&33&23&28&26&1 &5 & 7 & 3\\
 \hline
  2&5&4&2&4&25&20&35&30&1 &6 & 9 & 4\\
  \hline
 1&3&2&3&3&13&14&9&15&1 &5 & 6 & 2\\
  \hline
 3&6&3&4&6&61&82&51&63&1 &6 & 8 & 3\\
  \hline
   1&3&2&2&4&13&11&12&9&1 &5 & 6 & 2\\
    \hline
 1&4&2&2&4&13&12&19&11&1 &5 &7 & 3 \\
  \hline
\end{tabular}
\end{table}

We make repeated use of the following effective result as in \cite{HS,jafari,stamate} in order to reduce the number of cases for determining the Betti numbers of the tangent cones. 
\begin{lemma}\label{HS} Assume that the multiplicity of the monomial curve $C_S$ is $n_i$. Let the $K$-algebra homomorphism $\pi_i : A \rightarrow \bar{A}=K[X_1,\dots,\bar{X_i},\dots,X_k]$ be defined by $\pi_i(X_i) = \bar{X_i}=0$ and $\pi_i(X_j) = X_j$ for $i \neq j$, and set $\bar{I}=\pi_i(I^*_S)$. If the tangent cone $gr_{\m}(R_S)$ is Cohen-Macaulay, then the Betti sequences of $gr_{\m}(R_S)$ and that of $\bar{A}/\bar{I}$ are the same.
\end{lemma}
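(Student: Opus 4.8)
The plan is to reduce the statement to the classical principle that quotienting a module by a linear nonzerodivisor does not change its Betti numbers. The crucial input is that, under our hypotheses, the variable $X_i$ becomes a nonzerodivisor modulo $I^*_S$. Since $n_i$ is the multiplicity of $C_S$, the element $t^{n_i}$ is the smallest generator of $\m$, and a short argument with the finite set $\{s\in S : s-n_i\notin S\}$ shows that $(t^{n_i})$ is a reduction of $\m$, so $t^{n_i}$ is a minimal reduction of the one-dimensional local ring $R_S$. It is then standard (a Valabrega--Valla type criterion, used e.g.\ in \cite{HS,jafari,stamate}) that $gr_{\m}(R_S)$ is Cohen--Macaulay if and only if the initial form $x_i^{*}=X_i+I^*_S$ is a nonzerodivisor on $gr_{\m}(R_S)\cong A/I^*_S$. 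Since the tangent cone is assumed Cohen--Macaulay, $x_i^{*}$ is a nonzerodivisor.

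Next I would identify the target ring. Writing $M=A/I^*_S$ and noting that $\bar A=A/(X_i)$ with $\bar I=\pi_i(I^*_S)=(I^*_S+(X_i))/(X_i)$, we get $\bar A/\bar I\cong A/(I^*_S+(X_i))\cong M/x_i^{*}M$ as graded $\bar A$-modules. So it suffices to show $\beta_j^{A}(M)=\beta_j^{\bar A}(M/x_i^{*}M)$ for every $j$. For this, let $\mathbf F$ be the minimal graded free resolution of $M$ over $A$. Resolving $\bar A$ over $A$ by the Koszul complex $0\to A\xrightarrow{X_i}A\to\bar A\to 0$ and using that $X_i$ is a nonzerodivisor on $A$ while $x_i^{*}$ is one on $M$, we obtain $\operatorname{Tor}^A_j(M,\bar A)=0$ for all $j\ge 1$; hence $\mathbf F\otimes_A\bar A$ is a graded free resolution of $M/x_i^{*}M$ over $\bar A$. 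It remains minimal because the differentials of $\mathbf F$ have entries in the irrelevant ideal $(X_1,\dots,X_k)$, whose image lies in the irrelevant ideal $(X_1,\dots,\widehat{X_i},\dots,X_k)$ of $\bar A$. Therefore $\beta_j^{\bar A}(\bar A/\bar I)=\rank(F_j\otimes_A\bar A)=\rank F_j=\beta_j^{A}(A/I^*_S)$ for all $j$, which is exactly the asserted equality of Betti sequences.

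The one genuinely substantive step is the first: that the multiplicity hypothesis makes $t^{n_i}$ a (minimal) reduction of $\m$ and that, together with Cohen--Macaulayness, this forces the linear form $x_i^{*}$ to be a nonzerodivisor on the associated graded ring. Granting this regularity, the remaining two steps are the routine homological bookkeeping of ``killing a regular linear form''; the only care needed is to keep the isomorphism $M/x_i^{*}M\cong\bar A/\bar I$ one of graded modules, so that the numerical Betti data transfer correctly.
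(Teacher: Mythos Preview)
Your argument is correct and follows exactly the approach of the paper: the paper's two-sentence proof asserts that Cohen--Macaulayness of the tangent cone makes $X_i$ regular on $A/I^*_S$ and then invokes the well-known invariance of Betti numbers under quotienting by a regular element. You have simply supplied the details behind both assertions (the reduction/Valabrega--Valla step for regularity, and the Tor/Koszul bookkeeping for the Betti numbers), so there is no substantive difference in strategy.
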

\begin{proof} If the tangent cone $gr_{\m}(R_S)$ is Cohen-Macaulay, then $X_i$ is regular on $A/{I^*_S}$. The result follows from the well known fact that Betti sequences are the same up to a regular sequence.
\end{proof}

Therefore, the problem of determining the Betti sequence of the tangent cone is reduced to computing the Betti sequence of the ring $\bar{A}/\bar{I}$. In all proofs about the minimal free resolution of $\bar{A}/\bar{I}$ we use the following criterion by Buchsbaum-Eisenbud to confirm the exactness, leaving the not so difficult task of checking if it is a complex to the reader.
\begin{theorem}\cite[Corollary 2]{bu-ei}
Let
$$0{\longrightarrow}F_{k-1}\stackrel{\phi_{k-1}}{\longrightarrow}\cdots\stackrel{\phi_2}{\longrightarrow}F_1\stackrel{\phi_1}{\longrightarrow}F_0$$
be a complex of free modules over a Noetherian ring $A$. Let {\rm rank}$({\phi_i})$ be the size of the largest nonzero minor of the matrix describing $\phi_i$, and let $I(\phi_i)$ be the
ideal generated by the minors of maximal rank. Then the complex is exact if and only if 

(a) $\mbox {\rm rank}(\phi_{i+1})+\mbox {\rm rank}(\phi_i)=\mbox {\rm rank}(F_i)$ and

(b) $I(\phi_i)$ contains an $A$-sequence of length $i$

\noindent
for all $1\le i\le k-1$.
\end{theorem}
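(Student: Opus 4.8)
The plan is to prove the Buchsbaum--Eisenbud exactness criterion as stated, namely the equivalence of exactness of the complex $0\to F_{k-1}\to\cdots\to F_1\to F_0$ with conditions (a) and (b). Since the statement is quoted from \cite{bu-ei} as a known corollary, the natural approach is to derive it from the core Buchsbaum--Eisenbud theorem, whose standard form reads: the complex is exact if and only if for each $i$ one has $\rank(\phi_{i+1})+\rank(\phi_i)=\rank(F_i)$ and $\operatorname{depth}(I(\phi_i))\ge i$, where $I(\phi_i)$ denotes the ideal of $\rank(\phi_i)$-sized minors. Thus the entire task reduces to reconciling the depth condition in that theorem with condition (b) as phrased here, which speaks of $I(\phi_i)$ containing an $A$-sequence of length $i$. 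I would first record the precise form of the main Buchsbaum--Eisenbud theorem that I am taking as the starting point, and then argue the translation of hypotheses.

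First I would address the rank condition (a), which is verbatim the first condition of the underlying theorem, so no work is needed there beyond noting the agreement of the two rank normalizations (largest nonzero minor versus the rank of the map). Next I would treat the equivalence between condition (b), ``$I(\phi_i)$ contains an $A$-sequence of length $i$,'' and the depth inequality ``$\operatorname{depth}(I(\phi_i),A)\ge i$.'' The key fact I would invoke is the characterization of depth via regular sequences: for an ideal $J\subseteq A$ with $JA\ne A$, the grade (depth) of $J$ equals the common length of every maximal $A$-sequence contained in $J$. Consequently $J$ contains an $A$-sequence of length $i$ precisely when $\operatorname{depth}(J,A)\ge i$. I would spell out the two directions: if $\operatorname{depth}\ge i$ then a maximal regular sequence has length at least $i$, giving the required sequence of length $i$; conversely, the existence of a regular sequence of length $i$ inside $J$ forces $\operatorname{depth}(J,A)\ge i$ by definition of grade as the infimum of the relevant Ext-vanishing. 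This handles the unit-ideal edge case as well, since if $I(\phi_i)=A$ then it trivially contains a sequence of any length and its depth is conventionally infinite.

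The main obstacle will be the bookkeeping needed to ensure the two formulations use the same notion of rank and the same ideal of minors. Specifically, ``$\rank(\phi_i)$'' in the hypothesis is defined as the size of the largest nonzero minor, and ``$I(\phi_i)$'' as the ideal generated by the minors of maximal rank; I must verify these coincide with the usual determinantal-ideal conventions $I_r(\phi_i)$ with $r=\rank(\phi_i)$ appearing in the stated depth inequality. This is a matter of unwinding definitions rather than a deep difficulty, but it is where an error could silently creep in, so I would be careful to fix conventions before invoking the underlying theorem. Once the dictionary between ``contains an $A$-sequence of length $i$'' and ``depth $\ge i$'' is established, the equivalence is immediate and the corollary follows directly from the Buchsbaum--Eisenbud theorem applied termwise for all $1\le i\le k-1$.
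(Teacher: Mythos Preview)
The paper does not prove this statement at all; it is quoted verbatim as \cite[Corollary 2]{bu-ei} and used as a black-box tool in the subsequent sections, with no accompanying argument. So there is no ``paper's own proof'' to compare your proposal against.

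That said, your proposal is a reasonable sketch of how one would obtain this corollary from the main Buchsbaum--Eisenbud theorem: the rank condition is identical, and the translation between ``$I(\phi_i)$ contains an $A$-sequence of length $i$'' and ``$\operatorname{grade}(I(\phi_i))\ge i$'' is exactly the standard equivalence between grade and maximal regular-sequence length in a Noetherian ring. The only caveat is that you are not really proving the theorem from scratch but rather reducing it to an equivalent formulation of the same result; a self-contained proof would require the acyclicity lemma and the localization/depth arguments of the original paper. For the purposes of this article, however, no proof is expected or needed.
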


The structure of the paper is as follows: We treat the cases where $S$ is homogeneous in the next section and, when $S$ is not homogeneous we find the minimal free resolution of the ring $\bar{A}/\bar{I}$ in each subsequent section, completing the proof of Theorem \ref{main} by the virtue of Lemma \ref{HS}. We refer the reader to the book \cite{greuel-pfister} for the basics of Commutative Algebra as we use Singular \cite{singular} in our computations.
\section{Homogeneous cases}

In this section, we characterize which pseudo symmetric $4$-generated semigroups are homogeneous. We start recalling basic definitions from \cite{jafari}. The Ap\'ery set of $S$ with respect to $s\in S$ is defined to be $AP(S,s)=\{x\in S\ |\ x-s \notin S\}$ and the set of lengths of $s$ in $S$ is $$L(s)=\left\lbrace \displaystyle\sum\limits_{i=1}^{k}u_i \quad | \quad s=\sum\limits_{i=1}^{k}u_in_i, u_i\geq 0\right\rbrace.$$ Note that $L(s)$ is the set of standard degrees of monomials $X_1^{u_1}\cdots X_k^{u_k}$ of $S$-degree $\deg_S(X_1^{u_1}\cdots X_k^{u_k})=s$. A subset $T\subset S$ is said to be homogeneous if either it is empty or $L(s)$ is a singleton for all $0\neq s \in T$. $n_i$ being the smallest among $n_1, n_2, \dots, n_k$, the semigroup $S$ is said to be \textit{homogeneous} if the Ap\'ery set $AP(S,n_i)$ is homogeneous. 

\begin{proposition}\label{prop1}
Let $S$ be a 4 generated pseudo symmetric numerical semigroup. Then $S$ is homogeneous if and only if
\begin{itemize}
\item  $n_1$ is the multiplicity or
\item  $n_2$ is the multiplicity and $\alpha_1=\alpha_4$ or
\item  $n_3$ is the multiplicity and $\alpha_2=\alpha_{21}+1$ or
\item  $n_4$ is the multiplicity and $\alpha_3=\alpha_1-\alpha_{21}$.
\end{itemize}
\end{proposition}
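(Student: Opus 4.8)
The plan is to work directly with the combinatorial description of homogeneity: $S$ is homogeneous precisely when $L(s)$ is a singleton for every nonzero $s$ in the Apéry set $AP(S,n_i)$, where $n_i$ is the multiplicity. The key observation is that the binomials $f_1,\dots,f_5$ generating $I_S$ encode all the "length ambiguities" in $S$: if $X^a - X^b$ is a binomial in $I_S$ with $\deg_S X^a = \deg_S X^b = s$ and the two monomials have different total degree, then $s$ witnesses a non-singleton $L(s)$. Since $I_S$ is generated by $f_1,\dots,f_5$, every such ambiguity is (in the semigroup-congruence sense) generated by these five, so it suffices to examine the five $S$-degrees $s_j = \deg_S(\text{terms of } f_j)$ and decide, for each choice of multiplicity $n_i$, whether the relevant $s_j$ land in $AP(S,n_i)$ and whether they have more than one length. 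Concretely, the total-degree gap across $f_j$ is: for $f_1$, $\alpha_1 - \alpha_4$; for $f_2$, $\alpha_2 - \alpha_{21} - 1$; for $f_3$, $\alpha_3 - (\alpha_1 - \alpha_{21} - 1) - 1 = \alpha_3 - \alpha_1 + \alpha_{21}$; for $f_4$, $\alpha_4 - (\alpha_2 + \alpha_3 - 1)$; for $f_5$, $(\alpha_{21}+1+\alpha_3-1) - (1 + \alpha_4 - 1) = \alpha_{21} + \alpha_3 - \alpha_4$. Since all $\alpha_i > 1$ and $0 < \alpha_{21} < \alpha_1 - 1$, the gap for $f_2$ is always nonzero (it equals $\alpha_2 - \alpha_{21} - 1$, which can vanish only if $\alpha_2 = \alpha_{21}+1$), and the gap for $f_4$ is always strictly negative; so $f_4$ always produces a genuine length ambiguity at its $S$-degree $s_4$.

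The heart of the argument is therefore: for each of the four choices of which $n_i$ is the multiplicity, determine which of the $s_j$ lie in $AP(S,n_i)$. I would first record which generator is the multiplicity in each stated case — using the Komeda formulas one checks the inequalities among $n_1, n_2, n_3, n_4$ — and then, for the chosen $n_i$, test membership $s_j - n_i \notin S$ for the problematic degrees $s_j$. The claim to be proved splits into two directions. For the "only if" direction (homogeneity forces the stated condition), I would show that when the stated $\alpha$-condition fails, one of the binomials $f_j$ with nonzero total-degree gap has its $S$-degree in $AP(S,n_i)$, exhibiting a non-singleton length set and so destroying homogeneity. For the "if" direction, I would show that under the stated condition every element of $AP(S,n_i)$ has a unique length; here the efficient route is to use the standard basis / Apéry set description from \cite{SahinSahin} (or a direct description of $AP(S,n_i)$ in terms of monomials not divisible by the leading terms of a standard basis) and check that no rewriting among these normal-form monomials changes the total degree — equivalently, that the only binomials in $I_S$ supported on $AP$-monomials are the degree-preserving ones.

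A cleaner way to organize the "if" direction, which I would try first, is to use the equivalence (already invoked implicitly in the paper) between homogeneity with Cohen-Macaulay tangent cone and "homogeneous type," together with the explicit Betti table: but that is circular here since Proposition \ref{prop1} is meant to be logically prior. So instead I would argue at the level of the numerical semigroup. Write each $n_i$ explicitly; for the case $n_1 = $ multiplicity, note $n_1 = \alpha_2\alpha_3(\alpha_4-1)+1$, compute $AP(S,n_1)$ — it has $n_1$ elements and one can parametrize them by the monomials $X_2^{a}X_3^{b}X_4^{c}$ with $b < \alpha_3$, $c < \alpha_4$ (and an appropriate bound on $a$) plus corrections coming from $f_5$ — and verify directly that the total degree $a+b+c$ is determined by the $S$-degree. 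The analogous bookkeeping handles $n_2, n_3, n_4$ under their respective extra hypotheses $\alpha_1 = \alpha_4$, $\alpha_2 = \alpha_{21}+1$, $\alpha_3 = \alpha_1 - \alpha_{21}$, in each case the hypothesis being exactly what makes the one potentially-offending binomial ($f_1$, $f_3$, or $f_4$ respectively, after the change of variables) either degree-preserving or not supported on Apéry monomials.

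The main obstacle I anticipate is the case analysis for which generator is actually the multiplicity: the Komeda parametrization does not make the ordering of $n_1, \dots, n_4$ transparent, so I would need to carefully extract, from $0 < \alpha_{21} < \alpha_1 - 1$ and $\alpha_i \ge 2$, exactly when each $n_i$ is smallest, and these sub-regions of parameter space are what produce the branch structure of both Proposition \ref{prop1} and Theorem \ref{main}. The second delicate point is verifying membership of the $s_j$ in the Apéry set: showing $s_j - n_i \notin S$ requires knowing $S$ well enough, and the honest tool for that is the standard-basis computation of \cite{SahinSahin}, which tells us the normal forms and hence lets us decide semigroup membership algorithmically; I would lean on that rather than attempt ad hoc number-theoretic estimates. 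Once these two points are pinned down, the rest is the routine check that total degree is (or is not) a function of $S$-degree on $AP(S,n_i)$.
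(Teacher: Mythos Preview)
Your approach is genuinely different from the paper's and considerably harder. The paper does not touch the Ap\'ery set directly at all: it invokes Corollary~3.10 of \cite{jafari}, which says $S$ is homogeneous iff there is a minimal generating set $E$ of $I_S$ in which every non-homogeneous binomial has a monomial divisible by $X_i$ (where $n_i$ is the multiplicity). Sufficiency is then a one-line check on $E=\{f_1,\dots,f_5\}$: for each $i$, the only $f_j$ with no monomial divisible by $X_i$ is $f_1$ (for $i=2$), $f_2$ (for $i=3$), $f_3$ (for $i=4$), and none (for $i=1$), and the stated $\alpha$-condition is exactly the condition making that $f_j$ homogeneous. Necessity uses the indispensability result from \cite{SahinSahin}: since $f_1,f_2,f_3$ lie in \emph{every} minimal generating set, if the relevant one is non-homogeneous and has no term divisible by $X_i$, then no $E$ can satisfy the criterion. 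This bypasses entirely the Ap\'ery-membership verification that you flag as the second delicate point.

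Your route could in principle be completed, but note three issues. First, a slip: the offending binomials for multiplicity $n_2,n_3,n_4$ are $f_1,f_2,f_3$ respectively, not $f_1,f_3,f_4$ as you wrote (check which $X_i$ fails to appear in either monomial). Second, your ``main obstacle'' is not one: the proposition is stated conditionally on which $n_i$ is the multiplicity, so you never need to decide which parameter regions realize each case. Third, and more substantively, for the ``only if'' direction you would need to show that the $S$-degree $s_j$ of the offending $f_j$ actually lies in $AP(S,n_i)$, or else find some other witness; this is exactly the non-trivial step that the \cite{jafari} criterion plus indispensability lets you avoid, and your proposal does not indicate a concrete mechanism for it beyond deferring to standard-basis computations.
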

\begin{proof}
By Corollary 3.10 of \cite{jafari}, $S$ is homogeneous if and only if there exists a set $E$ of minimal generators for $I_S$ such that every nonhomogeneous element of $E$ has a term that is divisible by $X_i$ when $n_i$ is the multiplicity. Corollary 2.4 of \cite{SahinSahin} states that indispensable binomials of $I_S$ are $\{f_1, f_2, f_3, f_4,f_5 \}$ if $\alpha_1-\alpha_{21}>2$ and are $\{f_1, f_2, f_3, f_5\}$ if $\alpha_1-\alpha_{21}=2$. Therefore, they must appear in every minimal generating set. Let us take $E=\{f_1,\hdots, f_5\}$ in order to prove sufficiency of the conditions. 
\begin{itemize}
\item Since each $f_j$ ($j=1,\hdots, 5$) has a term that is divisible by $X_1$, when $n_1$ is the multiplicity, $S$ is always homogeneous.
\item The only binomial in $E$ that has no monomial term divisible by $X_2$ is $f_1$. Hence when $n_2$ is the multiplicity and $\alpha_1=\alpha_4$, it follows that $f_1$ and thus $S$ is homogeneous.
\item The only binomial in $E$ that has no monomial term divisible by $X_3$ is $f_2$. Hence when $n_3$ is the multiplicity and $\alpha_2=\alpha_{21}+1$, $f_2$ and thus $S$ is homogeneous.
\item Similarly, only $f_3$ has no monomial term that is divisible by $X_4$ and it is homogeneous when $\alpha_3=\alpha_1-\alpha_{21}$. Hence, $S$ is homogeneous if $n_4$ is the multiplicity. 
\end{itemize} 
For the necessity of these conditions, recall that $f_1,f_2$ and $f_3$ are indispensable, so they must be homogeneous when the multiplicity is $n_2,n_3$ and $n_4$, respectively.  
\end{proof}
\section{The proof when the multiplicity is $n_1$ }\label{resolutions}
If the tangent cone is Cohen-Macaulay and the semigroup is homogeneous, it is known that the semigroup is of homogeneous type. When $n_1$ is the multiplicity, the pseudo symmetric semigroup is always homogeneous by Proposition \ref{prop1} and hence the Betti sequence is $(1,5,6,2)$ in this case. 
\section{The proof when the multiplicity is $n_2$}
Let $n_2$ be the multiplicity and the tangent cone is Cohen-Macaulay. If $ \alpha_1 = \alpha_4$, then the Betti sequence is $(1,5,6,2)$ by Proposition \ref{prop1}. So, we treat the cases $\alpha_1 < \alpha_4$ and $\alpha_1 > \alpha_4$ separately.
\subsection{The proof in the case $\alpha_1 < \alpha_4$} 
In this case, $\{f_1,f_2,f_3,f_4,f_5\}$ is a standard basis of $I_S$ by \cite[Lemma 3.8]{SahinSahin}. Since, ${\bar{I}}$ is the image of $I^*_S$ under the map $\pi_2$ sending only $X_2$ to $0$, it follows that ${\bar{I}}$ is generated by 	
$G_*=\{X_1^{\alpha_1},X_1^{\alpha_{21}}X_4, X_3^{\alpha_3}, X_4^{\alpha_4}, X_1^{\alpha_{21}+1}X_3^{\alpha_3-1}\}.$
 We prove the claim by demonstrating that the following complex is a minimal free resolution of $\bar{A}/\bar{I}$ by the virtue of Lemma \ref{HS}:
$$0\longrightarrow A^3\stackrel{\phi_3}{\longrightarrow}A^7\stackrel{\phi_2}{\longrightarrow}A^5\stackrel{\phi_1}{\longrightarrow}A\longrightarrow 0$$
	where $\phi_1=\begin{bmatrix} 
	X_1^{\alpha_1}& X_1^{\alpha_{21}}X_4& X_3^{\alpha_3}& X_4^{\alpha_4}& X_1^{\alpha_{21}+1}X_3^{\alpha_3-1}
	\end{bmatrix}$\\

$\phi_2=\begin{bmatrix}
 0& X_4&0 &0 &X_3^{\alpha_3-1} & 0& 0  \\
 0& -X_1^{\alpha_1-\alpha_{21}}&X_1X_3^{\alpha_3-1} &X_4^{\alpha_4-1} & 0&0 &-  X_3^{\alpha_3} \\
 X_1^{\alpha_{21}+1}& 0& 0&0 &0 &X_4^{\alpha_4} & X_1^{\alpha_{21}}X_4 \\
 0& 0& 0&-X_1^{\alpha_{21}} & 0&-X_3^{\alpha_3} & 0 \\
 -X_3& 0& -X_4& 0& -X_1^{\alpha_1-\alpha_{21}-1} &0 &0  
\end{bmatrix}$ \\

$\phi_3=\begin{bmatrix}
 -X_4& 0& 0 \\
 0& X_3^{\alpha_3-1}& 0 \\
 X_3 &X_1^{\alpha_1-\alpha_{21}-1} & 0\\
 0& 0&-X_3^{\alpha_3}  \\
 0 & -X_4& 0\\
 0 &0 &X_1^{\alpha_{21}} \\
 X_1 & 0&-X_4^{\alpha_4-1} 
\end{bmatrix}$ 

It is easy to check that $\rank \phi_1=1$, $\rank \phi_2=4$, $\rank \phi_3=3$. So, we show that $I(\phi_i)$ contains a regular sequence of length $i$, for all $i=1,2,3$. Since this is obvious for $i=1$, we discuss the other cases. For the matrix $\phi_2$, the $4$-minor corresponding to the rows $1,2,4,5$ and columns $1,5,6,7$ is computed to be $-X_3^{3\alpha_3}$. Similarly, the $4$-minor corresponding to the rows $2,3,4,5$ and columns $1,2,4,5$ is $X_1^{2\alpha_1}$. As these minors are relatively prime, the ideal $I(\phi_2)$ contains a regular sequence of length $2$. The $3$-minors of $\phi_3$ corresponding to the rows $1,5,7$ is $-X_4^{1+\alpha_4}$, to the rows $2,3,4$ is $X_3^{2\alpha_3}$ and to the rows $3,6,7$ is $X_1^{\alpha_1}$. As they are powers of different variables, they constitute a regular sequence of length $3$.
\subsection{The proof in the case $\alpha_1 > \alpha_4$} 
In this case, a standard basis of $I_S$ is, by \cite[Lemma 3.8]{SahinSahin}, the set
$\{f_1,f_2,f_3,f_4,f_5,f_6=X_1^{\alpha_1+\alpha_{21}}-X_2^{\alpha_2}X_3X_4^{\alpha_4-2}\}$. Since, ${\bar{I}}$ is the image of $I^*_S$ under the map $\pi_2$ sending only $X_2$ to $0$, it follows that ${\bar{I}}$ generated by 	
$$G_*=\{X_3X_4^{\alpha_4-1},X_1^{\alpha_{21}}X_4,X_3^{\alpha_3},X_4^{\alpha_4},X_1^{\alpha_{21}+1}X_3^{\alpha_3-1},X_1^{\alpha_1+\alpha_{21}}\}.$$
 We prove the claim by demonstrating that the following complex is a minimal free resolution of $\bar{A}/\bar{I}$ by the virtue of Lemma \ref{HS}:
$$0\longrightarrow A^4\stackrel{\phi_3}{\longrightarrow}A^9\stackrel{\phi_2}{\longrightarrow}A^6\stackrel{\phi_1}{\longrightarrow}A\longrightarrow 0$$
where 
$$\phi_1=\begin{bmatrix} 
X_3X_4^{\alpha_4-1}&X_1^{\alpha_{21}}X_4&X_3^{\alpha_3}&X_4^{\alpha_4}&X_1^{\alpha_{21}+1}X_3^{\alpha_3-1}&X_1^{\alpha_1+\alpha_{21}}
\end{bmatrix}$$ with $\phi_2$ equals to:\\
	
$\begin{bmatrix}
 -X_4& 0& 0& 0& 0&X_1^{\alpha_{21}} & 0&X_3^{\alpha_3-1} & 0 \\
 0& 0&-X_1^{\alpha_1} & -X_1X_3^{\alpha_3-1} &-X_4^{\alpha_4-1} & -X_3X_4^{\alpha_4-2}& 0& 0 &  X_3^{\alpha_3} \\
  0& -X_1^{\alpha_{21}+1}& 0 & 0 &0 & 0& 0 &-X_4^{\alpha_4-1}& -X_1^{\alpha_{21}}X_4 \\

 X_3& 0 & 0 & 0& X_1^{\alpha_{21}} & 0& 0 &0 & 0 \\
  0& X_3& 0& X_4&0 &0 &-X_1^{\alpha_1-1} & 0& 0 \\
 0& 0&  X_4& 0& 0& 0 & X_3^{\alpha_3 -1}& 0& 0
\end{bmatrix}$ \\

and\\

$\phi_3=\begin{bmatrix}
0& -X_1^{\alpha_{21}}& 0& 0\\
X_4& 0& 0 & 0\\
0& 0& -X_3^{\alpha_3-1}& 0 \\
X_3& 0 &X_1^{\alpha_1-1} & 0\\
 0&X_3 & 0& 0 \\
 0 & -X_4& 0& -X_3^{\alpha_3-1}\\
 0& 0& X_4& 0 \\
 0 &0 &0& X_1^{\alpha_{21}} \\
 X_1 & 0& 0 &-X_4^{\alpha_4-2} 
\end{bmatrix}$ 

It is easy to check that $\rank \phi_1=1$, $\rank \phi_2=5$, $\rank \phi_3=4$. So, we show that $I(\phi_i)$ contains a regular sequence of length $i$, for all $i=1,2,3$. Since this is obvious for $i=1$, we discuss the other cases. For the matrix $\phi_2$, the $5$-minor corresponding to the rows $1, 2, 3, 5, 6$ and columns $1, 3, 4, 5, 8$ is computed to be $-X_4^{1+2\alpha_4}$. Similarly, the $5$-minor corresponding to the rows $1,2, 4, 5, 6$ and columns $1, 2, 7, 8, 9$ is $-X_3^{3\alpha_3}$. As these minors are powers of different variables, the ideal $I(\phi_2)$ contains a regular sequence of length $2$. The $4$-minors of $\phi_3$ corresponding to the rows $1, 4, 8, 9$ is $X_1^{2\alpha_{21}+\alpha_1}$, to the rows $3, 4, 5, 6$ is $X_3^{2\alpha_3}$ and to the rows $2, 6, 7, 9$ is $-X_4^{1+\alpha_4}$. As they are powers of different variables they constitute a regular sequence of length $3$.

\section{The proof when the multiplicity is $n_3$}

Suppose that the tangent cone is Cohen-Macaulay. If $\alpha_2 = \alpha_{21}+1$, then the Betti sequence is $(1,5,6,2)$ by Proposition \ref{prop1}. If $\alpha_2 < \alpha_{21}+1$, then a minimal standard basis for $I_S$ is either $\{f_1,f_2, f_3, f_4, f_5, f_6=X_1^{\alpha_1-1}X_4-X_2^{\alpha_2-1}X_3^{\alpha_3}\}$ or $\{f_1,f_2,f_3,f_4'=X_4^{\alpha_4}-X_2^{\alpha_2-2}X_3^{2\alpha_3-1},f_5,f_6\}$, by \cite[Lemma 3.12]{SahinSahin}.  Since, $\pi_3$ sends only $X_3$ to $0$, it follows that in both cases the ideal ${\bar{I}}=\pi_3(I^*_S)$ is generated by 	
	$$G_*=\{X_1^{\alpha_1},X_2^{\alpha_2}, X_1^{\alpha_1-\alpha_{21}-1}X_2, X_4^{\alpha_4},X_2X_4^{\alpha_4-1}, X_1^{\alpha_1-1}X_4\}.$$
 We prove the claim by demonstrating that the following complex is a minimal free resolution of $\bar{A}/\bar{I}$ by the virtue of Lemma \ref{HS}:
	$$0\longrightarrow A^3\stackrel{\phi_3}{\longrightarrow}A^8\stackrel{\phi_2}{\longrightarrow}A^6\stackrel{\phi_1}{\longrightarrow}A\longrightarrow 0$$
	where $\phi_1=\begin{bmatrix} 
X_1^{\alpha_1}& X_2^{\alpha_2}&  X_1^{\alpha_1-\alpha_{21}-1}X_2& X_4^{\alpha_4}& X_2X_4^{\alpha_4-1}& X_1^{\alpha_1-1}X_4
	\end{bmatrix}$\\

$\phi_2=\begin{bmatrix}
 0& -X_4&0& 0&0 &0 &X_2 & 0 \\
 0&0& X_1^{\alpha_1-\alpha_{21}-1}&0& -X_4^{\alpha_4-1} & 0&0 & 0 \\
- X_4^{\alpha_4-1}&0 &-X_2^{\alpha_2-1} &0&0 & -X_1^{\alpha_{21}}X_4&-X_1^{\alpha_{21}+1} & 0  \\
0& 0& 0& X_2& 0& 0& 0& X_1^{\alpha_1-1} \\
 X_1^{\alpha_1-\alpha_{21}-1}&0&0& -X_4&X_2^{\alpha_2-1}& 0 & 0 &0 \\
 
  0&X_1&0& 0& 0&X_2& 0& -X_4^{\alpha_4-1} 
\end{bmatrix}$ \\

$\phi_3=\begin{bmatrix}
 0& -X_2^{\alpha_2-1}& -X_1^{\alpha_{21}}X_4 \\
 -X_2& 0& 0 \\
 0& X_4^{\alpha_4-1}& 0\\
 0& 0& -X_1^{\alpha_1-1}\\
 0 &X_1^{\alpha_1-\alpha_{21}-1} & 0\\
  X_1 & 0&X_4^{\alpha_4-1} \\
 -X_4& 0 & 0\\
 0 &0 &X_2 \\

\end{bmatrix}$ 

It is easy to check that $\rank \phi_1=1$, $\rank \phi_2=5$, $\rank \phi_3=3$. So, we show that $I(\phi_i)$ contains a regular sequence of length $i$, for all $i=1,2,3$. Since this is obvious for $i=1$, we discuss the other cases. For the matrix $\phi_2$, the $5$-minor corresponding to the rows $1, 2, 3, 5, 6$ and columns $1, 2, 4, 5, 8$ is computed to be $-X_4^{3\alpha_4-1}$. Similarly, the $5$-minor corresponding to the rows $2, 3, 4, 5, 6$ and columns $1, 2, 3, 7, 8$ is $-X_1^{3\alpha_1-\alpha_{21}-1}$. As these minors are powers of different variables, the ideal $I(\phi_2)$ contains a regular sequence of length $2$. The $3$-minors of $\phi_3$ corresponding to the rows $1, 2, 8$ is $-X_2^{\alpha_{2}+1}$, to the rows $3, 6, 7$ is $-X_4^{2\alpha_4-1}$ and to the rows $4, 5, 6$ is $X_1^{2\alpha_1-\alpha_{21}-1}$. As they are powers of different variables, they constitute a regular sequence of length $3$.

\section{The proof when the multiplicity is $n_4$}
Suppose that the tangent cone is Cohen-Macaulay. If $\alpha_3 = \alpha_{1}-\alpha_{21}$, then the Betti sequence is $(1,5,6,2)$ by Proposition \ref{prop1}.  If $\alpha_3 < \alpha_{1}-\alpha_{21}$, then a minimal standard basis for $I_S$ is $\{f_1,f_2, f_3, f_4, f_5\}$, by \cite[Lemma 3.17]{SahinSahin}.  Since ${\bar{I}}=\pi_4(I^*_S)$, under the map $\pi_4$ sending only $X_4$ to $0$, it is generated by 	
	$$G_*=\{X_1^{\alpha_1}, X_2^{\alpha_2}, X_3^{\alpha_3}, X_1X_2^{\alpha_2-1}X_3^{\alpha_3-1},X_1^{\alpha_{21}+1}X_3^{\alpha_3-1} \}.$$
 We prove the claim by demonstrating that the following complex is a minimal free resolution of $\bar{A}/\bar{I}$ by the virtue of Lemma \ref{HS}:
$$0\longrightarrow A^3\stackrel{\phi_3}{\longrightarrow}A^7\stackrel{\phi_2}{\longrightarrow}A^5\stackrel{\phi_1}{\longrightarrow}A\longrightarrow 0$$
	where $\phi_1=\begin{bmatrix} 
	X_1^{\alpha_1}& X_2^{\alpha_2}& X_3^{\alpha_3}& X_1X_2^{\alpha_2-1}X_3^{\alpha_3-1}& X_1^{\alpha_{21}+1}X_3^{\alpha_3-1}
	\end{bmatrix}$\\

$\phi_2=\begin{bmatrix}
0 & X_2^{\alpha_2}& 0& 0& X_3^{\alpha_3-1}&0 & 0 \\
0 & -X_1^{\alpha_1}& -X_1X_3^{\alpha_3-1}& 0& 0&0 & -X_3^{\alpha_3} \\
-X_1^{\alpha_{21}+1} &0 & 0& 0& 0&-X_1X_2^{\alpha_2-1} & X_2^{\alpha_2} \\
0 & 0& X_2& -X_1^{\alpha_{21}}&0 & X_3& 0 \\
X_3 & 0& 0& X_2^{\alpha_2-1}& -X_1^{\alpha_1-\alpha_{21}-1}& 0& 0 
\end{bmatrix}$ \\

$\phi_3=\begin{bmatrix}
0 & -X_2^{\alpha_2-1}& 0 \\
0 & 0& -X_3^{\alpha_3-1} \\
-X_3 & 0& X_1^{\alpha_1-1} \\
0 & X_3& X_1^{\alpha_1-\alpha_{21}-1}X_2 \\
0 & 0& X_2^{\alpha_2} \\
X_2 & X_1^{\alpha_{21}}& 0\\
X_1& 0& 0  
\end{bmatrix}.$

It is easy to check that $\rank \phi_1=1$, $\rank \phi_2=4$, $\rank \phi_3=3$. So, we show that $I(\phi_i)$ contains a regular sequence of length $i$, for all $i=1,2,3$. Since this is obvious for $i=1$, we discuss the other cases. For the matrix $\phi_2$, the $4$-minor corresponding to the rows $1,3,4,5$ and columns $2,3,4,7$ is computed to be $X_2^{3\alpha_2}$. Similarly, the $4$-minor corresponding to the rows $2,3,4,5$ and columns $1,2,4,5$ is $-X_1^{2\alpha_1+\alpha_{21}}$. As these minors are relatively prime, the ideal $I(\phi_2)$ contains a regular sequence of length $2$. The $3$-minors of $\phi_3$ corresponding to the rows $1,5,6$ is $-X_2^{2\alpha_2}$, to the rows $2,3,4$ is $X_3^{1+\alpha_3}$ and to the rows $3,6,7$ is $-X_1^{\alpha_1+\alpha_{21}}$. As they are powers of different variables they constitute a regular sequence of length $3$.

\section*{Acknowledgements}

The authors thank the anonymous referee for comments improving the presentation of the paper.

\end{document}